\newtheorem{theorem}{Theorem}[section]
\newtheorem{corollary}[theorem]{Corollary}
\begin{document}
\bibliographystyle{plain}
\pagestyle{plain} \pagenumbering{arabic}
\date{}
\newpage
\title{\textbf{\small{On The Cohomological Dimension of Local Cohomology Modules  }}}
\author{\small{Vahap Erdo\u{g}du and Tu\u{g}ba Y\i{ı}ld\i{ı}r\i{ı}m\footnote{ Corresponding author}}}
\maketitle
\begin{center}
\small{\textit{Dedicated to the memory of Alexander Grothendieck}}
\end{center}
\begin{center}
\small{{Department of Mathematics} \\
{Istanbul Technical University}\\
{Maslak, 34469, Istanbul, Turkey}}
\end{center}
\maketitle

\begin{abstract}
Let $R$ be a Noetherian ring, $I$ an ideal of $R$ and $M$ an $R$-module with $\operatorname{cd}(I,M)=c$. In this article,  we first show that  there exists a descending chain of ideals   $I=I_c\supsetneq I_{c-1}\supsetneq  \cdots \supsetneq I_0$ of $R$ such that for each $0\leq i\leq c-1$, $\operatorname{cd}(I_i,M)=i$ and that the top local cohomology module $\operatorname{H}^i_{I_i}(M)$ is not Artinian. We then give sufficient conditions for a non-negative integer $t$ to be a lower bound for $\operatorname{cd}(I,M)$ and use this to conclude that  in non-catenary Noetherian local integral domains, there exist prime ideals that are not set theoretic complete intersection. Finally, we set conditions which determine whether or not a top local cohomology module is Artinian.
\end{abstract}
 {\bf \emph{Keywords}:}\emph{ Top local cohomology modules, Cohomological dimensions, Set theoretic complete intersections. 
\\2000 Mathematics Subject Classification.  	13D45, 13E10.}
%%%%%%%%%%%%%%%%%%%%%%%%%%%%%%%%%%%%%%%%%%%%%%%%%%%%%%%%%%%%%%%%%%
\section{Introduction}\label{intro}
%%%%%%%%%%%%%%%%%%%%%%%%%%%%%%%%%%%%%%%%%%%%%%%%%%%%%%%%%%%%%%%%%%
Throughout, $R$ denote a commutative Noetherian ring with unity, $I$ an ideal of $R$. For an $R$-module $M$, the $i$-th local cohomology module of $M$ with support in $I$ is defined as
\begin{equation*} 
\operatorname{H}^{i}_{I}(M)=\lim\limits_{\rightarrow} \operatorname{Ext}_{R}^{i}(R/I^{n},M).
\end{equation*}
For details about the local cohomology modules, we refer the reader to \cite{BS} and \cite{R.Hartshorne}.\\
One of the important invariant related to local cohomology modules is \textit{the cohomological dimension of M with respect to I}, denoted by $\operatorname{cd}(I,M)$, and  defined as:
 \begin{equation*} \operatorname{cd}(I,M) = \operatorname{sup}\  \{ i \in\mathbb{N}\ |\ \operatorname{H}^i_I(M)\neq 0 \}.
\end{equation*}
If $M=R$, we write $cd(I)$ instead of $cd(I,R)$.\\
%$ \operatorname{ht}_M(I)= \operatorname{ht} I(R/{AnnM})$.\\
There are two interesting questions related to local cohomology modules, the first one is to  determine the lower and upper bounds for $\operatorname{cd}(I,M)$ and the second one is to determine whether or not  $\operatorname{H}^i_I(M)$ is Artinian(see e.g. \cite{Aghapournahr}, \cite{Dibaei}, \cite{Conn}, \cite{Hartshorne}, \cite{Huneke} and \cite{Huneke-Lyubeznik}).\\
Our results in this regard are as follows:\\
In section 2, we show that for an $R$-module $M$ with $\operatorname{cd}(I,M)=c$, there is a descending chain of ideals 
\begin{equation*} I=I_c\supsetneq I_{c-1}\supsetneq  \cdots \supsetneq I_0
\end{equation*}
of $R$ such that for each $0\leq i\leq c$, $\operatorname{cd}(I_i,M)=i$, and that the top local cohomology module $\operatorname{H}^i_{I_i}(M)$ is not Artinian. \\
In section 3, we prove a result that gives a sufficient condition for an integer to be a lower bound for the cohomological dimension, $\operatorname{cd}(I,M)$, of $M$ at $I$. One of the important conclusion of this result is that over a Noetherian local ring $(R,\mathfrak m) $, for a finitely generated $R$-module $M$ of dimension $n$ and an ideal $I$ of $R$ with $\dim(M/IM)=d\geq 1$, $n-d$ is a lower bound for $\operatorname{cd}(I,M)$ and if moreover $\operatorname{cd}(I,M)=n-d$, then $
\operatorname{H}^d_{\mathfrak m} (\operatorname{H}^{n-d}_I(M)) \cong \operatorname{H}^{n}_{\mathfrak m}(M).$  As an application of this result, we show that in non-catenary Noetherian local integral domains, there exist prime ideals that are not set theoretic complete intersection.\\
In section 4, we examine the Artinianness and non-Artinianness of top local cohomology modules.
%%%%%%%%%%%%%%%%%%%%%%%%%%%%%%%%%%%%%%%%%%%%%%%%%%%%%%%%%%%%%%%%%%%%
\section{Descending Chains With Successive Cohomological Dimensions}\label{sec:1}
%%%%%%%%%%%%%%%%%%%%%%%%%%%%%%%%%%%%%%%%%%%%%%%%%%%%%%%%%%%%%%%%%
In this section, we prove the existence of descending chains of ideals and locally closed sets with successive cohomological dimensions. The main result of this section is the following:
%t_1----------------------------------------------------------------------------------------
\begin{theorem} \label{theorem2.1}Let $R$ be a Noetherian ring, $I$ an ideal of $R$ and $M$ an $R$-module with  $\operatorname{cd}(I,M)=c>0.$ Then there is a descending chain of ideals 
\begin{equation*} I=I_c\supsetneq I_{c-1}\supsetneq  \cdots \supsetneq I_0
\end{equation*}
such that $\operatorname{cd}(I_i,M)=i$ for all $i$, $0\leq i\leq c$. Moreover $\operatorname{H}^i_{I_i}(M)$ is not Artinian for all $i$, $0\leq i\leq c-1$.
\end{theorem}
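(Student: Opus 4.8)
The plan is a descending induction, peeling off one unit of cohomological dimension at a time; everything reduces to the following one-step assertion:

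\emph{if $J$ is an ideal of $R$ with $\operatorname{cd}(J,M)=j\ge 1$, then there is an ideal $\mathfrak b$ with $\sqrt{\mathfrak b}\subsetneq\sqrt J$, with $\operatorname{cd}(\mathfrak b,M)=j-1$, and with $\operatorname{H}^q_{\mathfrak b}(M)$ Artinian for every $q<j-1$.}

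Given this, put $I_c=I$ and, having constructed $I_j$ (carrying the extra property that $\operatorname{H}^q_{I_j}(M)$ is Artinian for all $q<j$), apply the assertion to $I_j$ to get $\mathfrak b$ and set $I_{j-1}:=\sqrt{\mathfrak b}\cap I_j$. Since $\sqrt{\mathfrak b}\subsetneq\sqrt J=\sqrt{I_j}$, we have $I_{j-1}\subsetneq I_j$ and $\sqrt{I_{j-1}}=\sqrt{\mathfrak b}$, so $\operatorname{cd}(I_{j-1},M)=\operatorname{cd}(\mathfrak b,M)=j-1$ and $\operatorname{H}^q_{I_{j-1}}(M)\cong\operatorname{H}^q_{\mathfrak b}(M)$ is Artinian for $q<j-1$. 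Running this down and setting $I_0=0$ at the bottom (legitimate, since $\operatorname{cd}(0,M)=0$, $\operatorname{H}^0_0(M)=M$, and $I_1\neq 0$ because $\operatorname{cd}(I_1,M)=1$) produces the chain; here $M\neq 0$ and $I\neq 0$ because $\operatorname{cd}(I,M)=c>0$.

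For the cohomological-dimension part of the one-step assertion, I would use $\operatorname{cd}(J,M)=\operatorname{cd}(\sqrt J,M)\le\operatorname{ara}(\sqrt J)=:t$, write $\sqrt J=\sqrt{(a_1,\dots,a_t)}$, and follow $\operatorname{cd}(\cdot,M)$ as generators are removed. The Mayer--Vietoris sequence for $(a_1,\dots,a_{t-1})$ and $(a_t)$, the vanishing $\operatorname{H}^n_{(a_t)}(M)=0$ for $n\ge 2$, and the identity $\sqrt{(a_1,\dots,a_{t-1})\cap(a_t)}=\sqrt{a_t(a_1,\dots,a_{t-1})}$ (generated by $\le t-1$ elements up to radical) constrain what the cohomological dimension can become after a deletion. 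Because a single deletion may in principle drop $\operatorname{cd}(\cdot,M)$ by more than one, the subtle point is the choice of generator: I would first localize at a prime $\mathfrak p$ minimal in $\operatorname{Supp}_R\operatorname{H}^j_J(M)$ (so $\operatorname{cd}(JR_{\mathfrak p},M_{\mathfrak p})=j$ and the top local cohomology there is $\mathfrak p R_{\mathfrak p}$-torsion), carry out the descent over $R_{\mathfrak p}$, pull back by contracting ideals, and then adjoin to the contracted ideal an element selected by prime avoidance so as to restore the exact value $j-1$ of the cohomological dimension over $R$ while preserving $\sqrt{\mathfrak b}\subsetneq\sqrt J$ and the Artinianness of the lower local cohomologies.

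The non-Artinianness in the theorem then follows from the completed chain. First, an Artinian module $N$ over a Noetherian ring has $\operatorname{cd}(\mathfrak a,N)\le 0$ for every ideal $\mathfrak a$: $N$ is $(\mathfrak m_1\cdots\mathfrak m_s)$-torsion for the finitely many maximals in its support; reducing to $N=\Gamma_{\mathfrak m}(N)$, one sees $N$ is $\mathfrak a$-torsion when $\mathfrak a\subseteq\mathfrak m$ and $\Gamma_{\mathfrak a}(N)=0$ otherwise, and a torsion module has vanishing higher local cohomology (its injective resolution may be taken with torsion terms, since $\Gamma_{\mathfrak a}$ preserves injectivity). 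Second, for $0\le i\le c-1$ feed the chain into the Grothendieck spectral sequence $E_2^{p,q}=\operatorname{H}^p_{I_{i+1}}(\operatorname{H}^q_{I_i}(M))\Rightarrow\operatorname{H}^{p+q}_{I_{i+1}}(M)$, available because $I_i\subseteq I_{i+1}$: the rows $q>i$ vanish as $\operatorname{cd}(I_i,M)=i$; the spots with $p\ge 1$ and $q<i$ vanish as $\operatorname{H}^q_{I_i}(M)$ is Artinian; and $\operatorname{H}^{i+1}_{I_{i+1}}(M)\neq 0$ as $\operatorname{cd}(I_{i+1},M)=i+1$. Hence the only graded piece of total degree $i+1$ that can be nonzero is $E_\infty^{1,i}$, a subquotient of $\operatorname{H}^1_{I_{i+1}}(\operatorname{H}^i_{I_i}(M))$, which is therefore nonzero; by the first point $\operatorname{H}^i_{I_i}(M)$ is not Artinian.

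The main obstacle is the one-step assertion itself, and specifically producing a single $\mathfrak b$ that is at once strictly radical-smaller than $J$, of cohomological dimension \emph{exactly} $j-1$, and with \emph{all} of its local cohomology in degrees $<j-1$ Artinian — the last being what lets the spectral-sequence count above land on the degree-$i$ term. Reconciling the Mayer--Vietoris/localization argument that secures the exact cohomological dimension with the correction that forces the lower cohomologies to be Artinian is the crux; the rest is routine manipulation of Mayer--Vietoris sequences and the spectral sequence.
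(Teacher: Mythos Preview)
Your proposal has a genuine gap: the entire argument rests on the ``one-step assertion,'' and you explicitly concede that you do not know how to prove it. The sketch you give --- remove generators from a generating set of $\sqrt J$ up to radical, track $\operatorname{cd}$ via Mayer--Vietoris, localize at a minimal prime of $\operatorname{Supp}\operatorname{H}^j_J(M)$, contract back, and then ``adjoin an element selected by prime avoidance so as to restore the exact value $j-1$'' --- is not a proof but a wish list. Nothing in it explains why the localize/contract step lands you below $j-1$ rather than at $j-1$, why a single adjoined element then raises $\operatorname{cd}$ by the precise amount needed, or why any of these moves keeps $\sqrt{\mathfrak b}\subsetneq\sqrt J$. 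Worse, you also demand that $\operatorname{H}^q_{\mathfrak b}(M)$ be Artinian for all $q<j-1$; for $j=2$ this asks that $\Gamma_{\mathfrak b}(M)$ be Artinian, which for an arbitrary (not finitely generated) $M$ is a severe constraint that your sketch does not address at all. Since your non-Artinianness argument via the spectral sequence depends on this extra Artinianness of the lower cohomologies, the second half of the theorem is also unproven.

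The paper's proof avoids all of this with a single clean idea you have overlooked: Noetherian maximality. Inside $I$, take an ideal $I_{c-1}$ maximal among those with $\operatorname{cd}(\cdot,M)<c$ (the zero ideal witnesses nonemptiness). For any $x\in I\setminus I_{c-1}$, maximality forces $\operatorname{cd}(I_{c-1}+Rx,M)=c$, while adding one element can raise $\operatorname{cd}$ by at most $1$; hence $\operatorname{cd}(I_{c-1},M)=c-1$, and the long exact sequence for the pair $(I_{c-1},Rx)$ shows $\operatorname{H}^{c-1}_{I_{c-1}}(M)\neq 0$ because $(\operatorname{H}^{c-1}_{I_{c-1}}(M))_x$ surjects onto $\operatorname{H}^{c}_{I_{c-1}+Rx}(M)\neq 0$. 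Iterating gives the chain. For non-Artinianness the paper simply observes that, by construction, for each $i<c$ there is $x$ with $\operatorname{cd}(I_i+Rx,M)=i+1$, and then invokes a known criterion (Dibaei--Vahidi): if $\operatorname{H}^i_{I_i}(M)$ were Artinian, then $(\operatorname{H}^i_{I_i}(M))_x=0$ and the same exact sequence would force $\operatorname{H}^{i+1}_{I_i+Rx}(M)=0$, a contradiction. No auxiliary Artinianness hypotheses on lower degrees are needed.
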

%------------------------------------------------------------------------------------------------------------------------------------
\begin{proof} Consider the set 
\begin{equation*} \mathbb{S}=\ \{ \ J\subsetneq I\ |\ \operatorname{cd}(J,M)< c\ \}.
\end{equation*}
Clearly, the zero ideal belongs to $\mathbb S$ and so $\mathbb S$ is a non-empty subset of ideals of $R$. Since $R$ is Noetherian, $\mathbb S$ has a maximal element, say $I_{c-1}$. We claim that $\operatorname{cd}(I_{c-1},M) =c-1$. To prove this, let $x\in I\setminus I_{c-1}$ and so $ I_{c-1}+Rx\subseteq I$. But then it follows from the maximality of $ I_{c-1}$ in $\mathbb{S}$ and Remark 8.1.3 of  \cite{BS} that 
\begin{equation*} c\leq \operatorname{cd}(I_{c-1}+Rx,M)\leq \operatorname{cd}(I_{c-1},M)+1<c+1.
\end{equation*}
Hence $\operatorname{cd}(I_{c-1}+Rx,M)=c$. Now consider the exact sequence
\begin{equation*}\xymatrix{\cdots \ar[r]& {(\operatorname{H}^{c-1}_{I_{c-1}} (M))}_x\ar[r]& \operatorname{H}^c_{I_{c-1}+Rx}(M)\ar[r]&{\operatorname{H}^{c}_{I_{c-1}} (M)}=0}.
\end{equation*}
Since  $\operatorname{H}^c_{I_{c-1}+Rx}(M)$ is nonzero, it follows that ${(\operatorname{H}^{c-1}_{I_{c-1}} (M))}_x$ is nonzero, then so is $\operatorname{H}^{c-1}_{I_{c-1}} (M)$. Therefore the claim follows.\\
Iterating this argument, one can obtain a descending chain of ideals, as desired.\\
For the second part, let $x\in I_{i+1}\setminus I_i$ and consider the ideal $I_{i}+Rx$. Then it follows from the construction of the ideal $I_i$ that $\operatorname{cd}({{I_i}+Rx},M)=i+1$. Now $\operatorname{H}^i_{I_i}(M)$ is non-Artinian follows from Corollary 4.1 of \cite{Dibaei}.
\end{proof}
%-----------------------------------------------------------------------------------------
Recall that a subspace $Z $ of a topological space $X$ is said to be \textit{ locally closed}, if it is the intersection of an open and a closed set. 
Let $X$ be a topological space, $Z\subseteq X$ be a locally closed subset of $X$ and let $F$ be an abelian sheaf on $X$. Then the $i^{th }$ local cohomology group of $F$  with support in $Z$ is denoted by $H^{i}_Z(X,F)$. We refer the reader to \cite{R.Hartshorne} and \cite{Ly} for its definition and details.\\
If, in particular, $X=Spec(R)$ is an affine scheme, where $R$ is a commutative Noetherian ring, and $F=M^{\sim}$ is the quasi coherent sheaf on $X$ associated to an $R$-module $M$, we write $H^{i}_Z(M)$ instead of $H^{i}_Z(X,M^{\sim})$.\\
%c_1-----------------------------------------------------------------------------------------
The following corollary may be considered as an easy application of our result above.
\begin{corollary} Let $R$ be a Noetherian ring, $M$ an $R$-module and $I$ an ideal of $R$ such that $cd(I,M)=c>1.$ Then there is a descending chain of locally closed sets 
\begin{equation*} T_{c-1}\supsetneq T_{c-2}\supsetneq  \cdots \supsetneq T_1
\end{equation*}
in Spec(R) such that $cd(T_i,M)=i$ for all $1\leq i\leq c-1$.
\end{corollary}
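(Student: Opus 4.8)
The plan is to deduce the corollary directly from Theorem~\ref{theorem2.1} by translating the chain of ideals $I=I_c\supsetneq I_{c-1}\supsetneq\cdots\supsetneq I_0$ into a chain of locally closed subsets of $\operatorname{Spec}(R)$. The natural candidates are the sets $T_i := V(I_i)\setminus V(I_{i+1})$ for $1\le i\le c-1$; each is by construction the intersection of the closed set $V(I_i)$ with the open set $\operatorname{Spec}(R)\setminus V(I_{i+1})$, hence locally closed. First I would recall the comparison between local cohomology with support in a locally closed set and ordinary local cohomology with support in an ideal: for $Z=V(\mathfrak a)\setminus V(\mathfrak b)$ with $\mathfrak b\subseteq\mathfrak a$ there is a long exact sequence relating $H^i_Z(M)$, $H^i_{V(\mathfrak a)}(M)=\operatorname{H}^i_{\mathfrak a}(M)$ and $\operatorname{H}^i_{\mathfrak b}(M)$ (see \cite{R.Hartshorne}, \cite{Ly}). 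From this one reads off that $\operatorname{cd}(Z,M):=\sup\{i\mid H^i_Z(M)\neq 0\}$ is controlled by $\operatorname{cd}(\mathfrak a,M)$ and $\operatorname{cd}(\mathfrak b,M)$.

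The key computation is then to show $\operatorname{cd}(T_i,M)=i$ for each $i$ in the stated range. Writing the relevant piece of the long exact sequence
\begin{equation*}
\cdots\to \operatorname{H}^{i}_{I_{i+1}}(M)\to \operatorname{H}^{i}_{I_i}(M)\to H^{i}_{T_i}(M)\to \operatorname{H}^{i+1}_{I_{i+1}}(M)\to\cdots
\end{equation*}
and using that Theorem~\ref{theorem2.1} gives $\operatorname{cd}(I_i,M)=i$ and $\operatorname{cd}(I_{i+1},M)=i+1$, I would argue as follows. For cohomological degrees above $i+1$ every term with support in $I_i$ or $I_{i+1}$ vanishes, so $H^j_{T_i}(M)=0$ for $j>i+1$; a small extra argument at degree $i+1$, using that the map $\operatorname{H}^{i+1}_{I_{i+1}}(M)\to \operatorname{H}^{i+1}_{I_i}(M)$ has target zero while $H^{i+1}_{T_i}(M)$ injects into $\operatorname{H}^{i+2}_{I_{i+1}}(M)=0$, pins down $H^{i+1}_{T_i}(M)=0$ as well, so $\operatorname{cd}(T_i,M)\le i$. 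For the reverse inequality, nonvanishing of $H^i_{T_i}(M)$ should follow because $\operatorname{H}^i_{I_i}(M)\neq 0$ while $\operatorname{H}^i_{I_{i+1}}(M)$ cannot surject onto it: indeed the sequence shows $\operatorname{coker}(\operatorname{H}^i_{I_{i+1}}(M)\to\operatorname{H}^i_{I_i}(M))$ embeds into $H^i_{T_i}(M)$, so it suffices to check this cokernel is nonzero, which one can extract from comparing the two Grothendieck spectral sequences / the fact that the natural map need not be surjective when the cohomological dimensions differ.

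Finally I would assemble the chain: since $I_1\supsetneq I_2\supsetneq\cdots\supsetneq I_{c-1}\supsetneq I_c=I$ is strictly descending, the corresponding sets $T_i=V(I_i)\setminus V(I_{i+1})$ are nonempty (as $\operatorname{cd}(T_i,M)=i\ge 1$ already forces this) and one checks the inclusions $T_{c-1}\supsetneq T_{c-2}\supsetneq\cdots\supsetneq T_1$; here one should be slightly careful, because $V(I_i)\setminus V(I_{i+1})$ and $V(I_{i-1})\setminus V(I_i)$ are a priori disjoint rather than nested, so the right definition to make the chain descending is $T_i := V(I_i)\setminus V(I)$ (equivalently $V(I_i)\cap(\operatorname{Spec}(R)\setminus V(I))$), which is still locally closed, still sits in the long exact sequence with $\operatorname{H}^\bullet_{I_i}(M)$ and $\operatorname{H}^\bullet_{I}(M)$, and is genuinely decreasing in $i$ since the $I_i$ are. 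I expect the main obstacle to be precisely this bookkeeping — choosing the locally closed sets so that they are simultaneously nested, have the advertised cohomological dimension, and fit a usable excision sequence — together with justifying the nonvanishing half of $\operatorname{cd}(T_i,M)=i$, since vanishing is the soft direction but nonvanishing requires genuinely using that $\operatorname{cd}(I_i,M)$ strictly exceeds $\operatorname{cd}(I_{i+1},M)$ (or $\operatorname{cd}(I,M)$, depending on the chosen normalization).
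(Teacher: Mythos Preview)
Your final choice $T_i = V(I_i)\setminus V(I)$ does give a nested chain of locally closed sets, but it does \emph{not} have the advertised cohomological dimension. With the indexing of Theorem~\ref{theorem2.1} (so $I_i\subseteq I$, $\operatorname{cd}(I_i,M)=i$, $\operatorname{cd}(I,M)=c$), the excision sequence reads
\[
\cdots \to \operatorname{H}^{j}_{I}(M)\to \operatorname{H}^{j}_{I_i}(M)\to H^{j}_{T_i}(M)\to \operatorname{H}^{j+1}_{I}(M)\to \operatorname{H}^{j+1}_{I_i}(M)\to\cdots
\]
For every $j$ with $i<j<c$ both $\operatorname{H}^{j}_{I_i}(M)$ and $\operatorname{H}^{j+1}_{I_i}(M)$ vanish, so $H^{j}_{T_i}(M)\cong \operatorname{H}^{j+1}_{I}(M)$; in particular $H^{c-1}_{T_i}(M)\cong \operatorname{H}^{c}_{I}(M)\neq 0$. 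Thus $\operatorname{cd}(T_i,M)=c-1$ for \emph{every} $i$, not $i$. (Your reversed chain $I_1\supsetneq\cdots\supsetneq I_c=I$ is also inconsistent with Theorem~\ref{theorem2.1}, but that is a separate bookkeeping slip.)

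The paper resolves exactly the tension you identified---nestedness versus the correct $\operatorname{cd}$---by making the \emph{opposite} choice: it fixes the outer closed set to be the one of small cohomological dimension and varies the inner one. Concretely, with $U_j=V(I_j)$ it sets $T_i:=U_1\setminus U_{i+1}=V(I_1)\setminus V(I_{i+1})$. These are visibly nested (as $i$ grows, $V(I_{i+1})$ shrinks), and in the excision sequence the two ``ideal'' terms now have $\operatorname{cd}$ equal to $1$ and $i+1$, so for $j\ge i+1$ both neighbours of $H^j_{T_i}(M)$ vanish, while at $j=i$ the map $H^{i}_{T_i}(M)\to \operatorname{H}^{i+1}_{I_{i+1}}(M)$ is surjective onto a nonzero module. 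This last surjectivity is also the clean way to get nonvanishing for your first candidate $V(I_i)\setminus V(I_{i+1})$: there is no need to analyse the cokernel of $\operatorname{H}^{i}_{I_{i+1}}(M)\to \operatorname{H}^{i}_{I_i}(M)$.
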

%----------------------------------
\begin{proof}  Let $I$ be an ideal of $R$ with $cd(I,M)=c>1.$ Then it follows from Theorem \ref{theorem2.1} that there is a descending chain of ideals 
\begin{equation*} I=I_c\supsetneq I_{c-1}\supsetneq  \cdots \supsetneq I_1 \supsetneq I_0
\end{equation*}
such that $cd(I_i,M)=i$ for all $0\leq i\leq c$. Let now $U_i=V(I_i)$ and define the locally closed sets $T_i:=U_1\setminus U_{i+1}$. Then it is easy to see that
\begin{equation*} T_{c-1}\supsetneq T_{c-2}\supsetneq  \cdots \supsetneq T_1.
\end{equation*}
On the other hand, it follows from Proposition 1.2 of \cite{Ly} that there is a long exact sequence, 
\begin{equation*} \xymatrix{\cdots \ar[r]& H^{j}_{U_1}(M)\ar[r]&{H^{j}_{T_i}(M)}\ar[r]& H^{j+1}_{U_{i+1}} (M)\ar[r]& H^{j+1}_{U_1}(M)\ar[r]& \cdots}
\end{equation*}
As $H^{j}_{U_i}(M)\cong H^{j}_{I_i}(M)$ for all $1\leq i\leq c-1$ and for all $j\geq 0$, it follows from the above long exact sequence that $cd(T_i,M)=i.$
\end{proof}
%------------------------------------------------------------------------------------------

%\subsection{Subsection title}\label{sec:2}

%as required. Don't forget to give each section
%and subsection a unique label (see Section~\ref{sec:1}).
%%%%%%%%%%%%%%%%%%%%%%%%%%%%%%%%%%%%%%%%%%%%%%%%%%%%%%%%%%%%%%%%%%%
\section{Lower Bound For Cohomological Dimension}\label{sec:3}
%%%%%%%%%%%%%%%%%%%%%%%%%%%%%%%%%%%%%%%%%%%%%%%%%%%%%%%%%%%%%%%%%%
%l1---------------------------------------------------------------------------------------
The main purpose of this section is to establish a lower bound for cohomological dimension and, in this regard, we prove the following theorem which gives a sufficient condition for an integer $t$ to be a lower bound for $\operatorname{cd}(I,M)$.  %----------------------------------------------------------------------------------------------------------------------------------
%----------------------------------------------------------------------------------------------------------------------------------
\begin{theorem}
\label{theorem3.1}Let $R$ be a Noetherian ring, $M$ an $R$-module (not necessarily finitely generated) and $I$ an ideal of $R$ with $\dim(R/{I+\operatorname{Ann}M})=d$. Let $t\geq 0$ be an integer. If there exists an ideal $J$ of $R$ such that $\operatorname{H}^{d+t}_{I+J}(M)\neq 0$, then $t$ is a lower bound for $\operatorname{cd}(I,M)$. Moreover, if $\operatorname{cd}(I,M)=t$, then
 \begin{equation*}
\operatorname{H}^d_J (\operatorname{H}^{t}_I(M)) \cong \operatorname{H}^{d+t}_{I+J}(M)
\end{equation*}
and $\dim\operatorname{Supp}(\operatorname{H}^{t}_I(M))= d$.
\end{theorem}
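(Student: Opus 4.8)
The natural tool here is the Grothendieck spectral sequence associated to the composite functor $\Gamma_{I+J}(-)=\Gamma_J(\Gamma_I(-))$, which holds because $\sqrt{(I+J)R}=\sqrt{IR+JR}$ and $\Gamma_I(-)$ sends injectives to $\Gamma_J$-acyclics. This gives
\begin{equation*}
E_2^{p,q}=\operatorname{H}^p_J(\operatorname{H}^q_I(M))\Longrightarrow \operatorname{H}^{p+q}_{I+J}(M).
\end{equation*}
The first task is to control the vanishing of the $E_2$-terms in two directions. Along the $q$-axis, for any $R$-module $N$ one has $\operatorname{H}^q_I(N)=0$ for $q>\operatorname{cd}(I,M)$ when $N=M$, and more to the point we must bound $p$: for any $R$-module $N$, $\operatorname{H}^p_J(N)=0$ once $p>\dim\operatorname{Supp}(N)$ (Grothendieck vanishing), and since $\operatorname{Supp}(\operatorname{H}^q_I(M))\subseteq V(I+\operatorname{Ann}M)$, which has dimension $d$, we get $E_2^{p,q}=0$ for $p>d$. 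Hence the spectral sequence is concentrated in the rectangle $0\le p\le d$, $0\le q\le\operatorname{cd}(I,M)$, and in particular $\operatorname{H}^n_{I+J}(M)=0$ for $n>d+\operatorname{cd}(I,M)$. Applying this with $n=d+t$ and the hypothesis $\operatorname{H}^{d+t}_{I+J}(M)\neq 0$ forces $d+t\le d+\operatorname{cd}(I,M)$, i.e. $t\le\operatorname{cd}(I,M)$, which is the asserted lower bound.

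For the "moreover" part, assume $\operatorname{cd}(I,M)=t$. Then the spectral sequence lives in $0\le p\le d$, $0\le q\le t$, so the total degree $d+t$ is the maximal one that can be nonzero, and the only contributing term on the anti-diagonal $p+q=d+t$ is the corner $E_2^{d,t}=\operatorname{H}^d_J(\operatorname{H}^t_I(M))$. This corner is on the boundary of the first quadrant rectangle: every differential $d_r$ out of $E_r^{d,t}$ lands in $E_r^{d+r,t-r+1}$, which is zero since $d+r>d$; every differential into $E_r^{d,t}$ comes from $E_r^{d-r,t+r-1}$, which is zero since $t+r-1>t$ for $r\ge 2$. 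Hence $E_2^{d,t}=E_\infty^{d,t}$, and since it is the only nonzero term of total degree $d+t$ the edge isomorphism gives $\operatorname{H}^d_J(\operatorname{H}^t_I(M))\cong\operatorname{H}^{d+t}_{I+J}(M)$.

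Finally, for the dimension statement: we already know $\operatorname{Supp}(\operatorname{H}^t_I(M))\subseteq V(I+\operatorname{Ann}M)$, so $\dim\operatorname{Supp}(\operatorname{H}^t_I(M))\le d$. For the reverse inequality, the isomorphism just proved shows $\operatorname{H}^d_J(\operatorname{H}^t_I(M))\cong\operatorname{H}^{d+t}_{I+J}(M)\neq 0$; but if $\dim\operatorname{Supp}(\operatorname{H}^t_I(M))<d$ then Grothendieck vanishing would give $\operatorname{H}^d_J(\operatorname{H}^t_I(M))=0$, a contradiction. Hence $\dim\operatorname{Supp}(\operatorname{H}^t_I(M))=d$. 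The step I expect to require the most care is the bookkeeping that makes $E_2^{d,t}$ a permanent cycle isomorphic to the abutment — one must check both incoming and outgoing differentials vanish and that no other anti-diagonal term survives — rather than any deep input; the vanishing theorems used are standard, and the only subtlety is that $M$ is not assumed finitely generated, so one must be sure that the support bound $\operatorname{Supp}(\operatorname{H}^q_I(M))\subseteq V(I)\cap\operatorname{Supp}(M)\subseteq V(I+\operatorname{Ann}M)$ and Grothendieck vanishing are invoked in the form valid for arbitrary modules.
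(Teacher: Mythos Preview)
Your proof is correct and follows essentially the same route as the paper: both use the Grothendieck spectral sequence $E_2^{p,q}=\operatorname{H}^p_J(\operatorname{H}^q_I(M))\Rightarrow\operatorname{H}^{p+q}_{I+J}(M)$, bound the $p$-direction by $d$ via the support inclusion $\operatorname{Supp}(\operatorname{H}^q_I(M))\subseteq V(I+\operatorname{Ann}M)$ and Grothendieck vanishing, deduce the lower bound from nonvanishing of the abutment in degree $d+t$, and then in the case $\operatorname{cd}(I,M)=t$ verify that the corner term $E_2^{d,t}$ has no incoming or outgoing differentials and is the sole contributor on its anti-diagonal. The only cosmetic difference is that the paper phrases the first step as ``some $E_2^{p,d+t-p}$ with $p\le d$ must be nonzero,'' whereas you phrase it as ``the abutment vanishes above total degree $d+\operatorname{cd}(I,M)$''; these are equivalent.
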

%------------------------------------
\begin{proof} Consider the Grothendieck's spectral sequence
\begin{equation*} E^{p,q}_2=\operatorname{H}^p_{J}(\operatorname{H}^q_I(M))  \Longrightarrow \operatorname{H}^{p+q}_{I+J}(M)
\end{equation*}
and look at the stage $p+q=n$. Since $\operatorname{Supp}(\operatorname{H}^q_I(M))\subseteq V(I)\cap \operatorname{Supp}(M)\subseteq V(I+\operatorname{Ann}M)$, $\dim\operatorname{Supp}(\operatorname{H}^q_I(M))\leq d$ for all $q$. Therefore it follows from Grothendieck's vanishing theorem that  for all $p>d,$ $E^{p,d+t-p}_2=0$. But then since from the hypothesis $\operatorname{H}^{d+t}_{I+J}(M)$ does not vanish, there is at least one $p\leq d$ such that
\begin{equation*} E^{p,d+t-p}_2=\operatorname{H}^p_{J}(\operatorname{H}^{d+t-p}_I(M)) \neq 0.
\end{equation*}
Hence $\operatorname{H}^{d+t-p}_I(M)\neq 0$ and so $\operatorname{cd}(I,M)\geq d+t-p\geq t.$\\
If, in particular, $\operatorname{cd}(I,M)=t$, then $E^{p,q}_2=0$ for all $q>t$. Now from the subsequent stages of the spectral sequence
\begin{equation*}
\xymatrix{E^{d-k,t+k-1}_k\ar[r]&E^{d,t}_k\ar[r]&E^{d+k,t-k+1}_k} 
\end{equation*}
and the fact that $E^{d-k,t+k-1}_k=E^{d+k,t-k+1}_k=0$  for all $k\geq 2$, we have $E^{d,t}_{\infty}=E^{d,t}_2$.  Hence $\operatorname{H}^d_J (H^{t}_I(M)) \cong \operatorname{H}^{d+t}_{I+J}(M)$.\\
Since $\operatorname{H}^d_J (\operatorname{H}^{t}_I(M)) \neq 0,$ it follows from Grothendieck's vanishing theorem that $\dim\operatorname{Supp}(\operatorname{H}^{t}_I(M))\geq d$. On the other hand, since $\dim\operatorname{Supp}(\operatorname{H}^{t}_I(M))\leq \dim(R/{I+\operatorname{Ann}M})=d$, we conclude that $\dim\operatorname{Supp}(\operatorname{H}^{t}_I(M))= d$.
\end{proof}
%----------------------------------------------------------------------------------------------------------------------------------
%----------------------------------------------------------------------------------------------------------------------------------
So far, for a finitely generated $R$- module $M$, the best known lower bound for $\operatorname{cd}(I,M)$ is $\operatorname{ht}_M(I)=\operatorname{ht}I(R/{\operatorname{AnnM}})$.  As an immediate consequence of Theorem \ref{theorem3.1}, we sharpen this bound to $\dim(M)-\dim(M/{IM})\geq \operatorname{ht}_M(I)$.
%----------------------------------------------------------------------------------------------------------------------------------
%----------------------------------------------------------------------------------------------------------------------------------
\begin{corollary}\label{corollary3.2} Let $(R,\mathfrak m)$ be a Noetherian local ring,  $M$ a finitely generated $R$-module of dimension $n$ and $I$ an ideal of $R$ such that $\dim(M/IM)=d$. Then $ n-d$ is a lower bound for $c=\operatorname{cd}(I,M)$. Moreover, if $c=n-d$, then 
\begin{equation*}
\operatorname{H}^{d}_{\mathfrak m} (\operatorname{H}^{n-d}_I(M)) \cong \operatorname{H}^{n}_{\mathfrak m}(M)
\end{equation*}
and $\dim \operatorname{Supp}(\operatorname{H}^{n-d}_I(M))= d$.
\end{corollary}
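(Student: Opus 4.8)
The plan is to derive Corollary~\ref{corollary3.2} as a direct specialization of Theorem~\ref{theorem3.1}, taking $R$ local with maximal ideal $\mathfrak m$ and choosing the auxiliary ideal $J$ to be $\mathfrak m$ itself. First I would record the dimension bookkeeping: since $M$ is finitely generated with $\dim M = n$, we have $\dim(R/\operatorname{Ann}M) = n$, and since $\dim(M/IM) = \dim(R/(I+\operatorname{Ann}M)) = d$, the ideal $I$ plays the role of ``$I$'' in Theorem~\ref{theorem3.1} with the same $d$. The integer $t$ of the theorem will be $t = n-d$.

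Next I would verify the nonvanishing hypothesis of Theorem~\ref{theorem3.1} for this choice of $J = \mathfrak m$. We need $\operatorname{H}^{d+t}_{I+\mathfrak m}(M) = \operatorname{H}^{n}_{\mathfrak m}(M) \neq 0$ (using $I + \mathfrak m = \mathfrak m$ and $d + t = n$). This is exactly Grothendieck's non-vanishing theorem: for a finitely generated module $M$ of dimension $n$ over a Noetherian local ring, $\operatorname{H}^n_{\mathfrak m}(M) \neq 0$. With this in hand, Theorem~\ref{theorem3.1} immediately yields that $t = n-d$ is a lower bound for $\operatorname{cd}(I,M)$, which is the first assertion. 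For the ``moreover'' part, assuming $c = \operatorname{cd}(I,M) = n-d = t$, the isomorphism part of Theorem~\ref{theorem3.1} gives $\operatorname{H}^d_{\mathfrak m}(\operatorname{H}^{n-d}_I(M)) \cong \operatorname{H}^{d+t}_{I+\mathfrak m}(M) = \operatorname{H}^n_{\mathfrak m}(M)$, and the same theorem gives $\dim\operatorname{Supp}(\operatorname{H}^{n-d}_I(M)) = d$.

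Finally I would address the inequality $n - d \geq \operatorname{ht}_M(I)$ mentioned in the surrounding text, since it is what justifies calling the bound a sharpening. This follows from the standard dimension inequality in a local ring: for any ideal $I$ and finitely generated module $M$, $\operatorname{ht}_M(I) + \dim(M/IM) \leq \dim M$, equivalently $\operatorname{ht}_M(I) \leq n - d$; this is immediate from going to $R/\operatorname{Ann}M$ and applying the inequality $\operatorname{ht}(I/\operatorname{Ann}M) + \dim(R/(I+\operatorname{Ann}M)) \leq \dim(R/\operatorname{Ann}M)$, valid in any Noetherian local ring.

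I do not anticipate a genuine obstacle here, since the corollary is a substitution instance of the theorem; the only point requiring a little care is confirming that $\operatorname{H}^n_{\mathfrak m}(M)\neq 0$ (Grothendieck non-vanishing, which needs $M$ finitely generated and $n = \dim M$) and that $I + \mathfrak m = \mathfrak m$ collapses the $I+J$ appearing in Theorem~\ref{theorem3.1} to $\mathfrak m$. If anything is delicate it is making sure the hypotheses of Theorem~\ref{theorem3.1} are met verbatim — in particular that $\dim(R/(I+\operatorname{Ann}M)) = d$ rather than merely $\dim(M/IM) = d$ — but these coincide for finitely generated $M$, so the reduction is clean.
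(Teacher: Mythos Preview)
Your proposal is correct and follows exactly the paper's approach: apply Theorem~\ref{theorem3.1} with $J=\mathfrak m$ and $t=n-d$, invoking Grothendieck's non-vanishing theorem to ensure $\operatorname{H}^n_{\mathfrak m}(M)\neq 0$. The paper's proof is the one-line ``This follows from Theorem~\ref{theorem3.1} and the fact that $\operatorname{H}^{n}_{\mathfrak m}(M)\neq 0$,'' and your write-up simply spells out the substitutions and the auxiliary identifications (such as $\dim(M/IM)=\dim(R/(I+\operatorname{Ann}M))$ for finitely generated $M$) that make this work.
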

%----------------------------------------------------------------------------------------------------------------------------------
\begin{proof}This follows from Theorem \ref{theorem3.1} and the fact that $H^{n}_{\mathfrak m}(M)\neq 0$. 
\end{proof}

%----------------------------------------------------------------------------------------------------------------------------------
%----------------------------------------------------------------------------------------------------------------------------------
%-------------------------------------------------------------------------------------
%----------------------------------------------------------------------------

%--------------------------------------------------------------------------
For an ideal $I$ of $R,$ it is a well-known fact that $\operatorname{ht}(I)\leq \operatorname{cd}(I)\leq \operatorname{ara}(I)$, where $\operatorname{ara}(I)$ denotes the smallest number of elements of R required to generate $I$ up 
to radical. If, in particular, $\operatorname{ara}(I)=\operatorname{cd}(I)=\operatorname{ht}(I)$, then $I$ is called a \textit{set-theoretic complete intersection ideal}. Determining set-theoretic complete intersection ideals is a classical and long-standing problems in commutative algebra and 
algebraic geometry. Many questions related to an ideal $I$ to being a set-theoretic complete intersection are still open, see \cite{L} for more details.\
Varbaro in \cite{Varbaro} show that under certain conditions there exists ideals $I$ satisfying the property that $\operatorname{cd}(I)=\operatorname{ht}(I)$, knowing the existence of ideals with such properties, we have the following: \\ 
%-----------------------------------------------------------------------------------------
\begin{corollary}\label{corollary3.3} Let $(R,\mathfrak m)$ be a Noetherian local ring of dimension $n$ and $I$ an ideal of $R$ with $d=dim(R/I)$ such that $\operatorname{cd}(I)=\operatorname{ht}(I)=h$. Then $\dim(R)=\operatorname{ht}(I)+\dim(R/I)$ and
\begin{equation*}
\operatorname{H}^{n-h}_{\mathfrak m} (\operatorname{H}^{h}_I(R)) \cong \operatorname{H}^{n}_{\mathfrak m}(R).
\end{equation*}
\end{corollary}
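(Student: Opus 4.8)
The plan is to deduce this directly from Corollary \ref{corollary3.2} applied to the module $M = R$. First I would record the bookkeeping: with $M=R$ we have $\dim(M) = n$, $IM = I$, so $\dim(M/IM) = \dim(R/I) = d$, and $c = \operatorname{cd}(I,M) = \operatorname{cd}(I) = h$ by hypothesis. Corollary \ref{corollary3.2} then tells us that $n - d$ is a lower bound for $c$, i.e. $h \geq n - d$.

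Next I would invoke the standard inequality $\operatorname{ht}(I) + \dim(R/I) \leq \dim(R)$, valid in any Noetherian local ring, which gives $h + d \leq n$, that is $h \leq n - d$. Combining the two inequalities yields $h = n - d$, equivalently $\dim(R) = \operatorname{ht}(I) + \dim(R/I)$, which is the first assertion.

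Finally, since we now know $c = h = n - d$, the ``moreover'' part of Corollary \ref{corollary3.2} applies and gives the isomorphism
\begin{equation*}
\operatorname{H}^{d}_{\mathfrak m}(\operatorname{H}^{n-d}_I(R)) \cong \operatorname{H}^{n}_{\mathfrak m}(R).
\end{equation*}
Rewriting $d = n - h$ and $n - d = h$ turns this into $\operatorname{H}^{n-h}_{\mathfrak m}(\operatorname{H}^{h}_I(R)) \cong \operatorname{H}^{n}_{\mathfrak m}(R)$, completing the proof. There is no real obstacle here; the only point requiring a moment's care is making sure the dimension equality $\dim(R) = \operatorname{ht}(I) + \dim(R/I)$ is obtained from the two opposite inequalities rather than assumed, and that the numerical substitution in the final isomorphism is carried out correctly.
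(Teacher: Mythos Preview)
Your proof is correct and follows essentially the same approach as the paper's: both apply Corollary~\ref{corollary3.2} with $M=R$ to get $n-d\le h$, combine this with the standard inequality $\operatorname{ht}(I)+\dim(R/I)\le\dim(R)$ to force equality, and then invoke the ``moreover'' clause of Corollary~\ref{corollary3.2} for the isomorphism. Your write-up is simply a bit more explicit about the bookkeeping.
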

%------------------------------------
\begin{proof} It follows from Corollary \ref{corollary3.2} that $ \dim(R)-\dim(R/I)\leq \operatorname{cd}(I)=\operatorname{ht}(I)$, while the other side of the inequality always holds. Therefore $\dim(R)=\operatorname{ht}(I)+\dim(R/I)$. Now the required isomorphim follows from Corollary  \ref{corollary3.2}.
\end{proof}
We end this section with the following conclusion :
%-------------------------------------------------------------------------------------------
\begin{corollary} Let $(R,\mathfrak m)$ be a non-catenary Noetherian local domain of dimension $n$. Then there is at least one prime ideal of $R$ that is not a set theoretic complete intersection.
\end{corollary}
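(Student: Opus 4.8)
The plan is to use the failure of catenarity to produce a single prime ideal $\mathfrak p$ of $R$ at which the dimension formula fails, that is, $\operatorname{ht}(\mathfrak p)+\dim(R/\mathfrak p)<n$, and then to invoke Corollary \ref{corollary3.3} to conclude that such a $\mathfrak p$ cannot be a set-theoretic complete intersection.

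First I would recall the standard fact that a Noetherian local domain $R$ is catenary if and only if $\operatorname{ht}(\mathfrak p)+\dim(R/\mathfrak p)=\dim R$ for every prime ideal $\mathfrak p$ of $R$. One implication is immediate: concatenating a chain of primes from $(0)$ to $\mathfrak p$ of length $\operatorname{ht}(\mathfrak p)$ with one from $\mathfrak p$ to $\mathfrak m$ of length $\dim(R/\mathfrak p)$ gives a saturated chain from $(0)$ to $\mathfrak m$ of length $\operatorname{ht}(\mathfrak p)+\dim(R/\mathfrak p)$, which in a catenary domain must equal $\dim R$. For the implication we need, since $R$ is non-catenary there are primes $\mathfrak q\subsetneq\mathfrak q'$ joined by two saturated chains of unequal length; splicing each of them with a fixed saturated chain from $(0)$ to $\mathfrak q$ and a fixed saturated chain from $\mathfrak q'$ to $\mathfrak m$ produces two saturated chains from $(0)$ to $\mathfrak m$ of unequal length, hence a saturated chain $(0)=\mathfrak p_0\subsetneq\mathfrak p_1\subsetneq\cdots\subsetneq\mathfrak p_m=\mathfrak m$ with $m<n$. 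Since no prime lies strictly between $(0)$ and $\mathfrak p_1$ one has $\operatorname{ht}(\mathfrak p_1)=1$, and by tracking heights and coheights along this chain — or, more expediently, simply by quoting the chain-condition theory — one extracts a prime $\mathfrak p$ of $R$ with $\operatorname{ht}(\mathfrak p)+\dim(R/\mathfrak p)<n$; necessarily $\mathfrak p\neq(0)$ and $\mathfrak p\neq\mathfrak m$, since the formula holds trivially for these.

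Next I would argue by contradiction. Suppose this $\mathfrak p$ is a set-theoretic complete intersection, so that $\operatorname{ara}(\mathfrak p)=\operatorname{cd}(\mathfrak p)=\operatorname{ht}(\mathfrak p)$; in particular $\operatorname{cd}(\mathfrak p)=\operatorname{ht}(\mathfrak p)$. Applying Corollary \ref{corollary3.3} to the ideal $\mathfrak p$, with $h=\operatorname{ht}(\mathfrak p)$ and $d=\dim(R/\mathfrak p)$, then yields $\dim R=\operatorname{ht}(\mathfrak p)+\dim(R/\mathfrak p)$, contradicting the strict inequality obtained above. Hence $\mathfrak p$ is not a set-theoretic complete intersection, which proves the corollary.

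I expect the only real obstacle to be the first step: passing from the bare hypothesis ``non-catenary'' to the existence of a prime violating the dimension formula. Everything after that is a one-line consequence of Corollary \ref{corollary3.3} together with the elementary bounds $\operatorname{ht}(\mathfrak p)\le\operatorname{cd}(\mathfrak p)\le\operatorname{ara}(\mathfrak p)$. In practice the cleanest write-up simply cites the standard equivalence of catenarity with the dimension formula for Noetherian local domains rather than reproving it.
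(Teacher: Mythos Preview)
Your proposal is correct and follows essentially the same route as the paper: both arguments pick a prime $\mathfrak p$ violating the dimension formula $\operatorname{ht}(\mathfrak p)+\dim(R/\mathfrak p)=\dim R$ and then use Corollary~\ref{corollary3.3} (contrapositively) to conclude $\operatorname{cd}(\mathfrak p)\neq\operatorname{ht}(\mathfrak p)$, hence $\mathfrak p$ is not a set-theoretic complete intersection. The paper simply asserts the existence of such a $\mathfrak p$ as a standard consequence of non-catenarity, which matches your closing suggestion to cite the equivalence rather than reprove it.
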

%-----------------------
\begin{proof} Since $R$ is non-catenary, there is a prime ideal $\mathfrak{p}$ of $R$ such that $\operatorname{ht}(\mathfrak{p})<n-\dim(R/{\mathfrak{p}})$. Then it follows from Corollary \ref{corollary3.3} that $\operatorname{cd}(\mathfrak{p})\neq \operatorname{ht}(\mathfrak{p})$ and therefore ${\mathfrak{p}}$ can not be a set theoretic complete intersection ideal.
\end{proof}
%%%%%%%%%%%%%%%%%%%%%%%%%%%%%%%%%%%%%%%%%%%%%%%%%%%%%%%%%%%%%%%%%%%%
\section{Artinianness and Non-Artinianness of Top Local Cohomology Modules}
%%%%%%%%%%%%%%%%%%%%%%%%%%%%%%%%%%%%%%%%%%%%%%%%%%%%%%%%%%%%%%%%%%%
Let $(R,\mathfrak m)$ be a Noetherian local ring and $M$ an $R$-module of dimension $n$. Recall that if $M$ is a coatomic or a weakly finite (in particular, finitely generated, $I$-cofinite, or a balanced big Cohen Macaulay) module, then $\operatorname{H}^n_{\mathfrak m}(M)$ is nonzero and Artinian [\cite{Aghapournahr-Melkersson}, \cite{Bagheri}].\\
In light of this information, we have the following results the first of which is the generalization of  Theorem 7.1.6 of \cite{BS}:
%--------------------------------------------------------------------------------------
\begin{theorem}\label{theorem4.1}  Let $(R,\mathfrak m)$ be a Noetherian local ring and $M$ an $R$-module of dimension $n$ such that $\operatorname{H}^{n}_{\mathfrak m}(M)$ is Artinian. Then $\operatorname{H}^n_I(M)$ is Artinian for all ideals $I$ of $R$.
\end{theorem}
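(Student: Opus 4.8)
The plan is to prove the slightly stronger statement that $\operatorname{H}^n_I(M)$ is a \emph{homomorphic image} of $\operatorname{H}^n_{\mathfrak m}(M)$; since homomorphic images of Artinian modules are Artinian, this yields the theorem. The vehicle is the Grothendieck spectral sequence already employed in the proof of Theorem \ref{theorem3.1}, now taken with $J=\mathfrak m$:
\begin{equation*}
E^{p,q}_2=\operatorname{H}^p_{\mathfrak m}(\operatorname{H}^q_I(M))\Longrightarrow \operatorname{H}^{p+q}_{\mathfrak m}(M),
\end{equation*}
which is available because $I\subseteq\mathfrak m$ forces $\Gamma_{\mathfrak m}\circ\Gamma_I=\Gamma_{\mathfrak m}$.

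The first step is a dimension estimate: $\dim\operatorname{Supp}(\operatorname{H}^q_I(M))\leq n-q$ for every $q\geq 0$. To see this, take $\mathfrak p\in\operatorname{Supp}(\operatorname{H}^q_I(M))$; then $\mathfrak p\supseteq\operatorname{Ann}M$ and $\operatorname{H}^q_{IR_{\mathfrak p}}(M_{\mathfrak p})\neq 0$, so Grothendieck's vanishing theorem over $R_{\mathfrak p}$ gives $q\leq\dim M_{\mathfrak p}\leq\operatorname{ht}(\mathfrak p/\operatorname{Ann}M)$ (height taken in $R/\operatorname{Ann}M$), and the always-valid inequality $\operatorname{ht}(\mathfrak p/\operatorname{Ann}M)+\dim(R/\mathfrak p)\leq\dim(R/\operatorname{Ann}M)=n$ then forces $\dim(R/\mathfrak p)\leq n-q$. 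In the special case $q=n$ this says $\operatorname{Supp}(\operatorname{H}^n_I(M))\subseteq\{\mathfrak m\}$, so $\operatorname{H}^n_I(M)$ is $\mathfrak m$-torsion and $\Gamma_{\mathfrak m}(\operatorname{H}^n_I(M))=\operatorname{H}^n_I(M)$.

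The second step feeds this into the spectral sequence. Combining the estimate with Grothendieck's vanishing theorem applied to $\operatorname{H}^p_{\mathfrak m}(-)$, one gets $E^{p,q}_2=0$ whenever $p+q>n$. Hence, on every page $r\geq 2$, the entry $E^{0,n}_r$ receives no differential (its source $E^{-r,\,n+r-1}_r$ lies in a negative column) and emits none (its target $E^{r,\,n-r+1}_r$ is a subquotient of $E^{r,\,n-r+1}_2=0$, since $r+(n-r+1)=n+1>n$). Therefore $E^{0,n}_\infty=E^{0,n}_2=\Gamma_{\mathfrak m}(\operatorname{H}^n_I(M))=\operatorname{H}^n_I(M)$. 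On the other hand, in the limit filtration of the abutment $\operatorname{H}^n_{\mathfrak m}(M)$ the term $E^{0,n}_\infty$ appears as the top quotient, so it is a homomorphic image of $\operatorname{H}^n_{\mathfrak m}(M)$. Putting these together, $\operatorname{H}^n_I(M)$ is a homomorphic image of the Artinian module $\operatorname{H}^n_{\mathfrak m}(M)$, hence Artinian.

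I would treat as routine, and merely cite, the facts that local cohomology commutes with localisation, that $\Gamma_I$ carries injectives to injectives (so the spectral sequence exists), Grothendieck's vanishing theorem, and the standard description of the limit filtration of a first-quadrant cohomological spectral sequence. The genuine content, and the step I expect to be the main obstacle, is the dimension estimate of Step 1: because $M$ is not assumed finitely generated and $R$ is not assumed catenary, one cannot invoke the catenary dimension formula, and must run the localisation argument using only the elementary height--coheight inequality valid in an arbitrary Noetherian local ring. Once that estimate is in place, the remainder is formal manipulation of the spectral sequence, exactly as in the proof of Theorem \ref{theorem3.1}.
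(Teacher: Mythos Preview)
Your argument is sound and actually proves the sharper fact that $\operatorname{H}^n_I(M)$ is a \emph{quotient} of $\operatorname{H}^n_{\mathfrak m}(M)$. One small repair is needed in Step~1: for a module that is not finitely generated one need not have $\dim(R/\operatorname{Ann}M)=\dim M$ (take $M=E(R/\mathfrak m)$ over a complete local domain of positive dimension, where $\operatorname{Ann}M=0$ but $\dim M=0$), so the chain of inequalities routed through $\operatorname{ht}(\mathfrak p/\operatorname{Ann}M)$ does not close up at $n$. The fix is to bypass the annihilator: pick $\mathfrak q\subseteq\mathfrak p$ in $\operatorname{Supp}_{R_{\mathfrak p}}(M_{\mathfrak p})$ with $\dim(R_{\mathfrak p}/\mathfrak q R_{\mathfrak p})=\dim M_{\mathfrak p}$; then $\mathfrak q\in\operatorname{Supp}_R M$, and concatenating a chain from $\mathfrak q$ to $\mathfrak p$ with one from $\mathfrak p$ to $\mathfrak m$ gives $\dim M_{\mathfrak p}+\dim(R/\mathfrak p)\le\dim(R/\mathfrak q)\le\dim M=n$, which is precisely the estimate you need. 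With this adjustment the rest of your spectral-sequence computation goes through verbatim.

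Your route is genuinely different from the paper's. The paper argues by induction on $d=\dim(R/I)$: choosing $x\in\mathfrak m$ with $\dim(R/(I+Rx))=d-1$, the long exact sequence
\[
\cdots\longrightarrow\operatorname{H}^n_{I+Rx}(M)\longrightarrow\operatorname{H}^n_I(M)\longrightarrow\operatorname{H}^n_I(M)_x\longrightarrow\cdots
\]
together with the vanishing of the rightmost term exhibits $\operatorname{H}^n_I(M)$ as a quotient of the inductively Artinian module $\operatorname{H}^n_{I+Rx}(M)$. Your spectral-sequence approach reaches the same surjection in one stroke and meshes naturally with the machinery already deployed in Theorems~\ref{theorem3.1} and~\ref{theorem4.2}; the paper's inductive argument, by contrast, avoids spectral sequences entirely and stays closer to the classical proof of Theorem~7.1.6 in \cite{BS} that it is meant to generalise. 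It is worth noting that the key vanishing $\operatorname{H}^n_I(M)_x=0$ in the paper's proof is most cleanly justified by exactly your dimension estimate for $q=n$ (giving $\operatorname{Supp}\operatorname{H}^n_I(M)\subseteq\{\mathfrak m\}$, hence $\mathfrak m$-torsion, hence $x$-torsion), so the two arguments share the same core ingredient.
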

%--------------------------------------------------------------------------------------
\begin{proof} We use induction on $d=\dim(R/I)$. If $d=0$, then $I$ is an $\mathfrak m$-primary ideal and so $\operatorname{H}^n_I(M)\cong \operatorname{H}^n_{\mathfrak m}(M)$ is Artinian.\\
Let now $d>0$ and suppose the hypothesis is true for all ideals $J$ of $R$ with $\dim(R/J)<d$.\\
Choose an element $x\in \mathfrak m$ such that  $\dim(R/{(I+Rx)})=d-1<d.$ Then by induction hypothesis, $\operatorname{H}^n_{I+Rx}(M)$ is Artinian. Now consider the exact sequence 
\begin{equation*}\xymatrix{\cdots \ar[r]& \operatorname{H}^n_{I+Rx}(M)\ar[r]&{\operatorname{H}^{n}_{I} (M)}\ar[r]& \operatorname{H}^{n}_{I} (M_x) \ar[r]& \cdots}
\end{equation*}
Since  $\operatorname{H}^n_{I+Rx}(M)$ is Artinian and $\operatorname{H}^{n}_{I} (M_x)=0$ ($\dim(M_x)<n$, as $x\in \mathfrak m$), it follows from the above exact sequence that $\operatorname{H}^{n}_{I} (M)$ is Artinian.
\end{proof}

Recall that a class $\mathcal S$ of $R$-modules is a Serre subcategory of the category of $R$-modules, $\mathcal{C}(R)$, when it is closed under taking submodules, quotients and extensions.
The main result of this section is the following:
%t_2---------------------------------------------------------------------------
\begin{theorem}\label{theorem4.2}Let $R$ be a Noetherian ring, $M$ an $R$-module (not necessarily finitely generated)  and let $\mathcal{S}$ be a Serre subcategory of $\mathcal{C}(R)$. Let $I\ and\  J$ be two ideals of $R$ such that $\operatorname{H}^{t+i}_J(\operatorname{H}^{c-i}_I(M))\in \mathcal{S}$ for all $0< i\leq c=\operatorname{cd}(I,M)$ and $\operatorname{H}^{t+c}_{I+J}(M)\notin \mathcal{S}$ for some positive integer $t$. Then $\operatorname{H}^t_J(\operatorname{H}^c_I(M))\notin \mathcal{S}. $
\end{theorem}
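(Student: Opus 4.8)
The plan is to run the same Grothendieck spectral sequence
\[
E^{p,q}_2=\operatorname{H}^p_{J}(\operatorname{H}^q_I(M))\Longrightarrow \operatorname{H}^{p+q}_{I+J}(M)
\]
used in the proof of Theorem \ref{theorem3.1}, but now to read off information along the single antidiagonal $p+q=t+c$. Since $\operatorname{cd}(I,M)=c$ we have $\operatorname{H}^q_I(M)=0$ for $q>c$, hence $E^{p,q}_2=0$ whenever $q>c$; and of course $E^{p,q}_2=0$ for $q<0$. Thus on the line $p+q=t+c$ the only terms that can be nonzero are the corner term $E^{t,c}_2=\operatorname{H}^t_J(\operatorname{H}^c_I(M))$ together with the terms $E^{t+i,\,c-i}_2=\operatorname{H}^{t+i}_J(\operatorname{H}^{c-i}_I(M))$ for $1\leq i\leq c$, all of which lie in $\mathcal S$ by hypothesis (the lower cutoff $i>0$ coming from the vanishing of $\operatorname{H}^{>c}_I(M)$, the upper cutoff $i\leq c$ from $q=c-i\geq 0$).

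Next I would track the corner term through the pages. For every $r\geq 2$ the incoming differential $d_r\colon E^{t-r,\,c+r-1}_r\to E^{t,c}_r$ vanishes, because its source is a subquotient of $E^{t-r,\,c+r-1}_2=0$ (here $c+r-1>c$). Consequently
\[
E^{t,c}_{r+1}=\ker\bigl(d_r\colon E^{t,c}_r\longrightarrow E^{t+r,\,c-r+1}_r\bigr)
\]
is a submodule of $E^{t,c}_r$, and iterating this gives that $E^{t,c}_\infty$ is (isomorphic to) a submodule of $E^{t,c}_2=\operatorname{H}^t_J(\operatorname{H}^c_I(M))$. Likewise, for each $1\leq i\leq c$ the term $E^{t+i,\,c-i}_\infty$ is a subquotient of $E^{t+i,\,c-i}_2\in\mathcal S$, hence lies in $\mathcal S$ since a Serre subcategory is closed under submodules and quotients.

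Then I would invoke the finite filtration of the abutment: $\operatorname{H}^{t+c}_{I+J}(M)$ carries a finite filtration whose successive quotients are precisely the modules $E^{p,\,t+c-p}_\infty$. By the previous paragraph every such quotient except possibly $E^{t,c}_\infty$ lies in $\mathcal S$. If $E^{t,c}_\infty$ were also in $\mathcal S$, then, $\mathcal S$ being closed under extensions, $\operatorname{H}^{t+c}_{I+J}(M)$ would lie in $\mathcal S$, contradicting the hypothesis $\operatorname{H}^{t+c}_{I+J}(M)\notin\mathcal S$. Hence $E^{t,c}_\infty\notin\mathcal S$; and since $E^{t,c}_\infty$ is a submodule of $\operatorname{H}^t_J(\operatorname{H}^c_I(M))$ while $\mathcal S$ is closed under submodules, we conclude $\operatorname{H}^t_J(\operatorname{H}^c_I(M))\notin\mathcal S$.

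The only genuinely delicate point — the one I would write out carefully rather than the routine Serre-closure manipulations — is the spectral-sequence bookkeeping: verifying that on the antidiagonal $p+q=t+c$ the surviving indices are exactly those covered by the hypothesis $0<i\leq c$, and that the differentials \emph{entering} $E^{t,c}_r$ genuinely vanish, so that $E^{t,c}_\infty$ is an honest submodule of $E^{t,c}_2$ and not merely a subquotient (a subquotient would not suffice to propagate ``$\notin\mathcal S$'' back up). Everything else is a formal consequence of the three closure properties defining a Serre subcategory.
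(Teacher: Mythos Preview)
Your proof is correct and follows essentially the same spectral-sequence argument as the paper: analyse the antidiagonal $p+q=t+c$, note that the graded pieces $E^{t+i,c-i}_\infty$ for $0<i\le c$ lie in $\mathcal S$ as subquotients of the hypothesis terms, use the filtration of $\operatorname{H}^{t+c}_{I+J}(M)$ and closure under extensions to force $E^{t,c}_\infty\notin\mathcal S$, and pull this back to $E^{t,c}_2$.

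One small correction to your commentary: your parenthetical claim that ``a subquotient would not suffice to propagate $\notin\mathcal S$ back up'' is mistaken. If $X$ is a subquotient of $Y$ and $Y\in\mathcal S$, then $X\in\mathcal S$ (submodule then quotient); contrapositively, $X\notin\mathcal S$ implies $Y\notin\mathcal S$. The paper in fact uses exactly this subquotient step at the end, without bothering to check that $E^{t,c}_\infty$ is an honest submodule of $E^{t,c}_2$. Your extra verification that the incoming differentials vanish is correct but unnecessary for the conclusion.
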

%---------------------------
\begin{proof} Consider the Grothendieck's spectral sequence 
\begin{equation*}
 E^{p,q}_2=\operatorname{H}^p_{J}(\operatorname{H}^q_I(M))  \Longrightarrow \operatorname{H}^{p+q}_{I+J}(M)
\end{equation*}
and look at the stage $p+q=c+t$. Let now $0< i\leq c=\operatorname{cd}(I,M).$ Since $E^{{t+i},c-i}_{\infty}= E^{{t+i},c-i}_{r}$ for sufficiently large $r$ and $E^{{t+i},c-i}_{r}$  is a subquotient of $E^{{t+i},c-i}_2\in \mathcal{S}$, $E^{{t+i},c-i}_{\infty}\in \mathcal{S}$ for all $0< i\leq c=\operatorname{cd}(I,M)$.\\
On the other hand, since $E^{t,c}_2=\operatorname{H}^{t}_{J}(\operatorname{H}^c_I(M))  \Longrightarrow \operatorname{H}^{t+c}_{I+J}(M)$, there exists a finite filtration
\begin{equation*}{0}=\Phi^{t+c+1}\operatorname{H}^{t+c}\subseteq \Phi^{t+c}\operatorname{H}^{t+c}\subseteq \cdots\subseteq  \Phi^{1}\operatorname{H}^{t+c}\subseteq \Phi^{0}\operatorname{H}^{t+c}=\operatorname{H}^{t+c}
\end{equation*}
of $\operatorname{H}^{t+c}=\operatorname{H}^{t+c}_{I+J}(M)$ such that $E^{p,q}_{\infty}={{\Phi^{p}\operatorname{H}^{t+c}}/{\Phi^{p+1}\operatorname{H}^{t+c}}}$ for all $p+q=t+c$.  Since for all $p< t$, $E^{p,q}_{\infty}=0$, we have that $\Phi^{t}\operatorname{H}^{t+c}=\cdots =\Phi^{1}\operatorname{H}^{t+c}=\Phi^{0}\operatorname{H}^{t+c}=\operatorname{H}^{t+c}$. But then since $E^{{t+i},c-i}_{\infty}={{\Phi^{t+i}\operatorname{H}^{t+c}}/{\Phi^{t+i+1}\operatorname{H}^{t+c}}}\in \mathcal{S}$ for all $0<i\leq c$, $\Phi^{t+1}\operatorname{H}^{t+c}\in \mathcal{S}$ and so it follows from the short exact sequence
\begin{equation*}\xymatrix{0\ar[r]& {\underbrace{\Phi^{t+1}\operatorname{H}^{t+c}}_{\in \mathcal{S}}}\ar[r]&{\underbrace{\operatorname{H}^{t+c}_{I+J}(M)}_{\notin \mathcal{S}}}\ar[r]& E^{t,c}_{\infty}\ar[r]&0}
\end{equation*} 
that $E^{t,c}_{\infty}\notin \mathcal{S}$. Since $E^{t,c}_{\infty}$ is a subquotient of $E^{t,c}_2$ and $E^{t,c}_{\infty}\notin \mathcal{S}$, it follows that $E^{t,c}_2=\operatorname{H}^{t}_J({\operatorname{H}^c_{I}(M)})\notin \mathcal{S}.$
\end{proof}
%c2-----------------------------------------------------------------------------
\begin{corollary}\label{corollary4.3}Let $(R,\mathfrak m)$ be a Noetherian local ring,  $M$ an $R$-module of dimension $n$ such that $\operatorname{H}^n_{\mathfrak m}(M)\neq 0$ with $\dim(R/{I+AnnM})=d$. If $\operatorname{H}^{\operatorname{cd}(I,M)}_I(M)$ is Artinian, then either $\operatorname{cd}(I,M)=n$ or $ \operatorname{H}^{n-i}_{\mathfrak m}(\operatorname{H}^{i}_I(M))\neq 0$ for some $n-d\leq i< \operatorname{cd}(I,M).$
\end{corollary}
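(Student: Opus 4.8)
The plan is to deduce this from Theorem~\ref{theorem4.2}, applied with $J=\mathfrak m$ and with $\mathcal S=\{0\}$ (the trivial Serre subcategory of $\mathcal C(R)$, which is certainly closed under submodules, quotients and extensions), arguing by contradiction. Write $c=\operatorname{cd}(I,M)$. Since $R$ is local we have $I\subseteq\mathfrak m$, hence $c=\operatorname{cd}(I,M)\le\operatorname{cd}(\mathfrak m,M)\le\dim M=n$ by Grothendieck's vanishing theorem; in particular $n-c\ge 0$. Suppose the conclusion fails, so $c\ne n$ (hence $c<n$) and $\operatorname{H}^{n-i}_{\mathfrak m}(\operatorname{H}^i_I(M))=0$ for every $i$ with $n-d\le i<c$. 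Put $t=n-c$, a positive integer.

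Next I would verify the hypotheses of Theorem~\ref{theorem4.2} for this $t$. Substituting $j=c-i$, the requirement that $\operatorname{H}^{t+i}_{\mathfrak m}(\operatorname{H}^{c-i}_I(M))\in\mathcal S$ for all $0<i\le c$ becomes $\operatorname{H}^{n-j}_{\mathfrak m}(\operatorname{H}^j_I(M))=0$ for all $0\le j<c$. For $0\le j<n-d$ this is automatic, since $\operatorname{Supp}(\operatorname{H}^j_I(M))\subseteq V(I+\operatorname{Ann}M)$ has dimension $\le d<n-j$, so the module vanishes by Grothendieck's vanishing theorem; for $n-d\le j<c$ it is exactly the assumption made for contradiction. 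Moreover $\operatorname{H}^{t+c}_{I+\mathfrak m}(M)=\operatorname{H}^{n}_{\mathfrak m}(M)\ne 0$, i.e. $\notin\mathcal S$. Theorem~\ref{theorem4.2} then yields $\operatorname{H}^{t}_{\mathfrak m}(\operatorname{H}^c_I(M))\notin\mathcal S$, that is, $\operatorname{H}^{n-c}_{\mathfrak m}(\operatorname{H}^c_I(M))\ne 0$.

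Finally I would extract the contradiction from the Artinianness hypothesis: an Artinian module over the Noetherian local ring $(R,\mathfrak m)$ is $\mathfrak m$-torsion (each cyclic submodule is Noetherian and Artinian, hence of finite length, hence annihilated by a power of $\mathfrak m$), and an $\mathfrak m$-torsion module is acyclic for $\Gamma_{\mathfrak m}$, so its higher local cohomology at $\mathfrak m$ vanishes. Applying this to the Artinian module $\operatorname{H}^c_I(M)$ with $p=t=n-c>0$ gives $\operatorname{H}^{n-c}_{\mathfrak m}(\operatorname{H}^c_I(M))=0$, contradicting the previous paragraph; hence either $c=n$ or $\operatorname{H}^{n-i}_{\mathfrak m}(\operatorname{H}^i_I(M))\ne 0$ for some $n-d\le i<c$. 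I do not anticipate a real obstacle; the proof is essentially bookkeeping once Theorem~\ref{theorem4.2} is available. The one point worth flagging is the choice of $\mathcal S$: taking $\mathcal S$ to be the category of Artinian modules would fail, because $\operatorname{H}^n_{\mathfrak m}(M)$ need not be non-Artinian, whereas with $\mathcal S=\{0\}$ the dimension bound on $\operatorname{Supp}(\operatorname{H}^j_I(M))$ supplies for free the vanishing of $\operatorname{H}^{n-j}_{\mathfrak m}(\operatorname{H}^j_I(M))$ needed for the small values of $j$.
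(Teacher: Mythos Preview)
Your argument is correct and follows essentially the same route as the paper: both take $\mathcal S=\{0\}$, assume $c<n$ together with the vanishing of $\operatorname{H}^{n-i}_{\mathfrak m}(\operatorname{H}^i_I(M))$ for $n-d\le i<c$, and apply Theorem~\ref{theorem4.2} with $J=\mathfrak m$ and $t=n-c$ to get $\operatorname{H}^{n-c}_{\mathfrak m}(\operatorname{H}^c_I(M))\ne 0$. The only cosmetic difference is the last step: the paper phrases the contradiction as $\dim\operatorname{Supp}(\operatorname{H}^c_I(M))>0$ (hence not Artinian), whereas you argue via $\mathfrak m$-torsion and acyclicity, which is an equivalent way of saying the same thing; you are also a bit more explicit in checking the vanishing for $0\le j<n-d$, which the paper leaves implicit.
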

%---------------------
\begin{proof} We prove the contrapositive of the statement. Let $\mathcal{S}$ be the category of zero module and suppose that $c=\operatorname{cd}(I,M)<n$ and $\operatorname{H}^{n-i}_{\mathfrak m}(\operatorname{H}^{i}_I(M))=0\in \mathcal{S}$ for all $n-d\leq i< c.$ But then since $\operatorname{H}^n_{\mathfrak m}(M)\notin \mathcal{S}$, it follows from Theorem \ref{theorem4.2} that $\operatorname{H}^{n-c}_{\mathfrak m}(\operatorname{H}^{c}_I(M))\neq 0$. Hence $\dim \operatorname{Supp}(\operatorname{H}^{c}_I(M))>0$ and so $\operatorname{H}^{c}_I(M)$ is not Artinian.
\end{proof}
The following results determine the Artinianness and non-Artinianness of the top local cohomology module, $\operatorname{H}^{\operatorname{cd}(I,M)}_I(M)$, for the ideals of small dimension.
\begin{theorem}\label{theorem4.4} Let $(R,\mathfrak m)$ be a Noetherian local ring,  $M$ an $R$-module of dimension $n$ such that $\operatorname{H}^{n}_{\mathfrak m}(M)$ is nonzero and Artinian and let $I$ be an ideal of $R$ such that $\dim(R/{I+\operatorname{Ann}M})=1$. Then $\operatorname{H}^{\operatorname{cd}(I,M)}_I(M)$ is Artinian if and only if $\operatorname{cd}(I,M)=n.$ 
\end{theorem}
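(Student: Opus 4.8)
The plan is to prove the two implications separately; in both directions the statement collapses quickly onto results already available in this section.

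For the implication asserting that $\operatorname{cd}(I,M)=n$ forces $\operatorname{H}^{\operatorname{cd}(I,M)}_I(M)$ to be Artinian, there is nothing to do beyond observing that then $\operatorname{H}^{\operatorname{cd}(I,M)}_I(M)=\operatorname{H}^n_I(M)$ and invoking Theorem \ref{theorem4.1}: since $\operatorname{H}^n_{\mathfrak m}(M)$ is Artinian by hypothesis, that theorem gives the Artinianness of $\operatorname{H}^n_I(M)$ for every ideal of $R$, in particular for $I$.

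For the converse I would argue by contradiction. Put $c:=\operatorname{cd}(I,M)$ and assume $\operatorname{H}^c_I(M)$ is Artinian but $c\neq n$. Grothendieck's vanishing theorem forces $c\leq\dim M=n$, hence $c\leq n-1$. All hypotheses of Corollary \ref{corollary4.3} are now in force: $\operatorname{H}^n_{\mathfrak m}(M)\neq 0$, $\dim(R/(I+\operatorname{Ann}M))=1$, and $\operatorname{H}^c_I(M)$ is Artinian. Since $c\neq n$, that corollary produces an integer $i$ with $n-1\leq i<c$ and $\operatorname{H}^{n-i}_{\mathfrak m}(\operatorname{H}^i_I(M))\neq 0$. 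But $c\leq n-1$ makes the range $n-1\leq i<c$ empty, since any such $i$ would satisfy $n-1\leq i\leq c-1\leq n-2$. This contradiction gives $c=n$, completing the proof.

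I do not anticipate a genuine obstacle here: the heart of the matter is the numerical observation that, with $d=\dim(R/(I+\operatorname{Ann}M))=1$, the index set $\{\,i : n-d\leq i<c\,\}$ appearing in Corollary \ref{corollary4.3} becomes empty as soon as $c<n$ — which is precisely where the smallness of $\dim(R/(I+\operatorname{Ann}M))$ is exploited, and for $d\geq 2$ this argument alone would leave a gap to be closed by other means. The only step deserving a moment's attention is checking that the appeal to Grothendieck's vanishing theorem bounding $\operatorname{cd}(I,M)\leq n$ is legitimate for the (not necessarily finitely generated) module $M$, which it is, since $\dim M=n$.
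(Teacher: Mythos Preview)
Your argument is correct, but it is organized differently from the paper's. The paper does not route through Corollary~\ref{corollary4.3}; instead it applies Theorem~\ref{theorem3.1} directly with $J=\mathfrak m$ and $t=n-1$: the nonvanishing of $\operatorname{H}^n_{\mathfrak m}(M)$ forces $\operatorname{cd}(I,M)\in\{n-1,n\}$, and in the case $c=n-1$ the ``moreover'' clause of Theorem~\ref{theorem3.1} yields $\dim\operatorname{Supp}(\operatorname{H}^{n-1}_I(M))=1$, hence non-Artinianness. Your approach instead exploits the observation that for $d=1$ the index set in Corollary~\ref{corollary4.3} is empty whenever $c<n$, so the disjunction there collapses to $c=n$. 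Both arguments rest on the same Grothendieck spectral sequence, but the paper's version is slightly more direct (it reads off the support dimension immediately), whereas yours packages the spectral-sequence step inside Corollary~\ref{corollary4.3} and finishes with a clean vacuous-range contradiction; your route also makes transparent exactly why the $d=1$ case is special and why $d\ge 2$ requires the additional clause appearing in Theorem~\ref{theorem4.5}.
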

%--------------------
\begin{proof} Since $\dim(R/{I+\operatorname{Ann}M})=1$ and $\operatorname{H}^n_{\mathfrak m}(M)\neq 0$ , it follows from Theorem \ref{theorem3.1} that either $\operatorname{cd}(I,M)=n-1$ or $\operatorname{cd}(I,M)=n.$ If $\operatorname{cd}(I,M)=n$, then by Theorem \ref{theorem4.1}, $\operatorname{H}^{n}_I(M)$ is Artinian. If, on the other hand, $\operatorname{cd}(I,M)=n-1$, then it follows from Theorem \ref{theorem3.1} that $\dim\operatorname{Supp}(\operatorname{H}^{n-1}_I(M))=1$ and so $\operatorname{H}^{n-1}_I(M)$ is non-Artinian.
\end{proof}

\begin{theorem}\label{theorem4.5} Let $(R,\mathfrak m)$ be a Noetherian local ring, $M$ an $R$-module of dimension $n$ such that $ \operatorname{H}^n_{\mathfrak m}(M)$ is nonzero and Artinian and let $I$ be an ideal of $R$ with $\dim(R/{I+\operatorname{Ann}(M)})=2$. If $\operatorname{H}^{\operatorname{cd}(I,M)}_I(M)$ is Artinian, then either $\operatorname{cd}(I,M)=n$, or $\operatorname{cd}(I,M)=n-1$ and $\operatorname{H}^{2}_{\mathfrak m}(\operatorname{H}^{n-2}_I(M)) \neq 0$.
\end{theorem}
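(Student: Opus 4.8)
The plan is to follow the template of the proof of Theorem \ref{theorem4.4}, first pinning $\operatorname{cd}(I,M)$ down to at most three values and then invoking Corollary \ref{corollary4.3} in the only remaining case. To begin, I would set $d=\dim(R/{I+\operatorname{Ann}M})=2$ and $t=n-2$. Since $\operatorname{H}^n_{\mathfrak m}(M)\neq 0$ and $I+\mathfrak m=\mathfrak m$, taking $J=\mathfrak m$ in Theorem \ref{theorem3.1} shows $\operatorname{H}^{d+t}_{I+J}(M)=\operatorname{H}^n_{\mathfrak m}(M)\neq 0$, so $t=n-2$ is a lower bound for $\operatorname{cd}(I,M)$. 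On the other hand, Grothendieck's vanishing theorem gives $\operatorname{cd}(I,M)\leq\dim M=n$. Hence $\operatorname{cd}(I,M)\in\{n-2,\,n-1,\,n\}$.

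Next I would eliminate the value $n-2$. If $\operatorname{cd}(I,M)=t=n-2$, then the ``moreover'' part of Theorem \ref{theorem3.1} (again with $J=\mathfrak m$) yields $\dim\operatorname{Supp}(\operatorname{H}^{n-2}_I(M))=d=2$. Since a module over the Noetherian local ring $(R,\mathfrak m)$ whose support has positive dimension cannot be Artinian, this contradicts the hypothesis that $\operatorname{H}^{\operatorname{cd}(I,M)}_I(M)$ is Artinian. Therefore $\operatorname{cd}(I,M)$ is either $n$ or $n-1$.

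Finally, if $\operatorname{cd}(I,M)=n$ we are in the first alternative and there is nothing more to do. If $\operatorname{cd}(I,M)=n-1<n$, I would apply Corollary \ref{corollary4.3}: its hypotheses are satisfied (we have $\operatorname{H}^n_{\mathfrak m}(M)\neq 0$, $\dim(R/{I+\operatorname{Ann}M})=2$, and $\operatorname{H}^{\operatorname{cd}(I,M)}_I(M)$ Artinian), so it produces an index $i$ with $n-2\leq i<n-1$ and $\operatorname{H}^{n-i}_{\mathfrak m}(\operatorname{H}^i_I(M))\neq 0$. The only such index is $i=n-2$, which gives $\operatorname{H}^2_{\mathfrak m}(\operatorname{H}^{n-2}_I(M))\neq 0$, i.e.\ the second alternative. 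I do not expect a serious obstacle: the argument is essentially bookkeeping with the numerical constraints coming out of Theorem \ref{theorem3.1} and Corollary \ref{corollary4.3}. The two points requiring a little care are the upper bound $\operatorname{cd}(I,M)\leq n$ (which rests on $\dim M=n$ together with Grothendieck vanishing) and the standard fact that Artinianness of a module over $(R,\mathfrak m)$ forces its support to be contained in $\{\mathfrak m\}$, used to discard the case $\operatorname{cd}(I,M)=n-2$.
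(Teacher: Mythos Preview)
Your proposal is correct and follows essentially the same approach as the paper: bound $\operatorname{cd}(I,M)$ between $n-2$ and $n$ via Theorem~\ref{theorem3.1} and Grothendieck vanishing, rule out $n-2$ using the dimension statement in Theorem~\ref{theorem3.1}, and in the case $\operatorname{cd}(I,M)=n-1$ invoke Corollary~\ref{corollary4.3} to force $i=n-2$. The only cosmetic difference is that the paper also notes (via Theorem~\ref{theorem4.1}) that $\operatorname{H}^n_I(M)$ is Artinian when $\operatorname{cd}(I,M)=n$, but as you observe this is not needed for the stated implication.
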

\begin{proof} Since $\dim(R/{I+\operatorname{Ann}M})=2$ and $\operatorname{H}^n_{\mathfrak m}(M)\neq 0$, it follows from Theorem \ref{theorem3.1} that $n-2$ is a lower bound for $\operatorname{cd}(I,M)$.  If $\operatorname{cd}(I,M)=n-2$, then again by Theorem \ref{theorem3.1}, $\dim\operatorname{Supp}(\operatorname{H}^{n-2}_I(M))=2$ and so $\operatorname{H}^{\operatorname{cd}(I,M)}_I(M)$ is non-Artinian. If, on the other hand,  $\operatorname{cd}(I,M)=n$, then  from Theorem \ref{theorem4.1}, $\operatorname{H}^{\operatorname{cd}(I,M)}_I(M)$ is Artinian. Finally, if  $\operatorname{cd}(I,M)=n-1$ and $\operatorname{H}^{n-1}_I(M)$ is Artinian, then the result follows from Corollary \ref{corollary4.3}. 
\end{proof}%--------------------------------------------------------------------------------------
\footnotesize{{}
\end{document}